\Crefname{ALC@unique}{Line}{Lines} 
\newcommandx{\ccom}[2][1=]{\todo[linecolor=red,backgroundcolor=red!25,bordercolor=red,#1]{crp: #2}}
\newcommandx{\ecom}[2][1=]{\todo[linecolor=blue,backgroundcolor=blue!25,bordercolor=blue,#1]{em: #2}}
\newtheorem{proposition}{Proposition}[section]
\newtheorem{lemma}[proposition]{Lemma}
\newtheorem{theorem}[proposition]{Theorem}
\newtheorem{corollary}[proposition]{Corollary}
\begin{document}
\title[Generalized Adaptive Partition-based Method for TSSP]{Generalized Adaptive Partition-based Method for Two-Stage Stochastic Programs with Fixed Recourse}
\author{Cristian Ramirez-Pico}
\author{Eduardo Moreno}
\address{Faculty of Engineering and Sciences, Universidad Adolfo Ib\'a\~nez, Santiago, Chile}
\email{cristian.ramirez@edu.uai.cl,eduardo.moreno@uai.cl} 
\thanks{Supported by CONICYT-Fondecyt Regular 1161064}
\subjclass[2010]{90C15, 90-08}

	\begin{abstract}
		We present a method to solve two-stage stochastic problems with fixed recourse when the uncertainty space can have either discrete or continuous distributions. Given a partition of the uncertainty space, the method is addressed to solve a discrete problem with one scenario for each element of the partition (sub-regions of the uncertainty space). Fixing first stage variables, we formulate a second stage subproblem for each element, and exploiting information from the dual of these problems, we provide conditions that the partition must satisfy to obtain the optimal solution. These conditions provide guidance on how to refine the partition, converging iteratively to the optimal solution. Results from computational experiments show how the method automatically refines the partition of the uncertainty space in the regions of interest for the problem. Our algorithm is a generalization of the adaptive partition-based method presented by Song \& Luedtke for discrete distributions, extending its applicability to more general cases. 
	\end{abstract}

\maketitle
	
	\section{Introduction}
	We study the following two-stage stochastic program (TSSP) with fixed recourse 
	\begin{equation}
	\min \left\lbrace c^\top x + \mathbb{E}\left[ \mathcal{Q}(x,\xi)\right] ~|~ x\in \mathcal{X} \right\rbrace
	\label{TSSP}
	\end{equation}
	where $\mathcal{X} \subseteq \mathbb{R}^{n}$ is a set assumed to be non-empty closed, $\xi$ is a random vector in the probability space $\left(\Omega,\mathcal{A},\mathbb{P}\right)$ containing the random elements $\big\lbrace h^\xi, T^\xi \big\rbrace$, and second-stage subproblem
	\begin{equation}
	\mathcal{Q}(x,\xi):=\min \left\lbrace q^\top y  ~|~ W y=h^\xi  - T^\xi x, ~y\ge0 \right\rbrace,
	\label{secondst}
	\end{equation}
	where fixed recourse matrix $W\in \mathbb{R}^{m\times n}$, deterministic costs $q \in \mathbb{R}^n$, random technology matrix $T^\xi \in \mathbb{R}^{m\times n}$ and random right-hand side (RHS) vector $h^\xi \in \mathbb{R}^m$. Furthermore, we assume that there exists $\bar{x}$ such that $\mathcal{Q}(\bar{x},\xi)$ is feasible and bounded in the whole outcome space $ \Omega$.
	Note that the support of the uncertainty set $\xi$ can be either continuous or discrete.

	In this paper, we propose a method to solve TSSPs  by iteratively and automatically aggregating the uncertainty set into a small number of scenarios and dissagregating them based on the information of dual subproblem variables. This approach yields a smaller version of the original stochastic problem by reducing both the number of variables and the number of constraints by an equivalent deterministic formulation of~\cref{TSSP}.  For the case of discrete distributions, this idea has been called the \emph{adaptive partition-based method} (APM) by Song \& Luedtke~\cite{song2015adaptive}, and it is based on the results of Espinoza \& Moreno~\cite{espinoza2014primal} and Bienstock \& Zuckerberg~\cite{bienstock2010solving}.  We present an alternative and more general proof that allows us to extend APM to a more general setting, in particular, to deal TSSPs  with continuous distributions for $\Omega$.

Let $P\subseteq \Omega$, and let  $T^P = \mathbb{E}\big[ T^\xi|P\big]$ and $h^P = \mathbb{E}\big[ h^\xi|P\big]$ be the conditional expectations of the components of $\xi$ given $P$. We denote the \emph{aggregated subproblem} as
\begin{equation}
\mathcal{Q}(x,\mathbb{E}\left[ \xi |P \right])=\min \left\lbrace q^\top y  ~|~ W y=h^P  - T^P x, ~y\ge0 \right\rbrace
\label{secondst_agg}
\end{equation}

The contribution of this paper is to provide conditions for a partition $\mathcal{P}$ of $\Omega$ such that the solution of Problem \cref{TSSP} is equivalent to solving
	\begin{equation}
	\min_{x\in\mathcal{X}} \left\{ c^\top x +  \sum_{P\in\mathcal{P}}\mathcal{Q}\left(x,\mathbb{E}\left[\xi|P\right]\right) \cdot \mathbb{P}(P)  \right\}.
		\label{AGGTSS}
	\end{equation}
	Note that this problem is equivalent to a TSSP with a discrete distribution of $|\mathcal{P}|$ scenarios for the uncertainty space. Moreover, this approach enables us to generate algorithms to obtain exact optimal solutions for general TSSPs.
	
	The remainder of this paper is organized as follows. \Cref{sec:LitRev} reviews the literature concerning the APM for discrete TSSPs  and other approaches to solve this problem.  \Cref{sec:GAPM} develops the generalized adaptive partition-based method (GAPM), with the main mathematical results to validate this approach.  \Cref{sec:algorithm} discusses the details of the algorithms that are implemented for two well-known stochastic programming problems in \Cref{sec:experiments}. Finally, concluding remarks are presented in \Cref{sec:conclusions}.

	\section{Literature Review}\label{sec:LitRev}

	In past decades, researchers have developed solution strategies for multiple stochastic optimization problems. However, the majority of studies start from the deterministic equivalent formulation to obtain alternative models that are more tractable in algorithmic terms, one of the most studied and utilized problems are two-stage stochastic problems. In their seminal paper, Kleywegt et al.~\cite{kleywegt2002sample} show that any TSSP formulation can be approximated by solving Problem \cref{TSSP} for a discrete set of samples of $\xi$ from the original probability space $\Omega$: they called this result the sample average approximation method. A key fact from the paper is that good approximations require a large number of scenarios to guarantee an $\epsilon$-optimal solution. Since then, most of the research on this problem has been focused on solving large-scale instances of discrete TSSPs with many scenarios. 

	A common and widely studied approach is to decompose TSSPs via the block structure of the scenario formulation. The most classic approach is called Benders decomposition (or the L-Shaped method as its stochastic variant~\cite{van1969shaped}). 

Most of the improvements of this approach focused on reducing the algorithm instability, such as the case of regularized decomposition~\cite{ruszczynski1986regularized}, level decomposition~\cite{lemarechal1995new,zverovich2012computational} and inexact bundle methods~\cite{oliveira2011inexact,wolf2014applying}. Recent developments with respect to Benders are proposed in~\cite{rahmaniani2017benders,rahmaniani2018accelerating,rahmaniani2019asynchronous}, which primarily explore how to accelerate and parallelize the technique, and \cite{angulo2016improving,ryan2016scenario}, which consider how to address integer problems.

	Other decomposition methodologies include stochastic decomposition~\cite{higle1991stochastic}, progressive hedging~\cite{rockafellar1991scenarios, watson2011progressive} and stochastic dual dynamic programming~\cite{pereira1991multi} for the case of multistage stochastic problems.
	
	A different approach was developed based on the general decomposition method proposed by Bienstock \& Zuckerberg~\cite{bienstock2010solving,munoz2018study}. Espinoza \& Moreno~\cite{espinoza2014primal} introduced an algorithm based on this decomposition method to minimize risk measures in linear programs. This idea was later extended by Song \& Luedtke~\cite{song2015adaptive} to general TSSPs with discrete distributions, where the term \emph{adaptive partition-based method} was coined. These studies have been extended recently by combination with Benders decomposition~\cite{pay2017partition}, level decomposition~\cite{pay2017partition,van2017adaptive}, and new extensions have been made to multi-stage stochastic problems~\cite{siddig2019adaptive}.

	As mentioned previously, most of the recent developments are oriented to the discrete case, relying on approximation by samples of continuous probability distributions for uncertain parameters. Exact methods for TSSPs with nondiscrete distributions are scarce, and they focus mostly on particular problems and distributions that can be reformulated in a more tractable manner. Other general techniques for these problems include~\cite{barika2013two}, which introduces equivalent linear and nonlinear formulations for TSSPs with simple recourse according to the probability distributions of random parameters, and~\cite{crainic2018reduced} which poses a methodology that benefits from the reduced cost of duality and sensitivity analysis to fix the correct values of some variables in the stochastic program, thereby reducing the size of the original problem.

	To the best of our knowledge, this paper is one of the first exact methods based on linear programming intended to deal with TSSPs with general continuous distributions for the stochastic parameters.
		
	\section{Generalized Adaptive Partition-based Method}\label{sec:GAPM}

  We propose a methodology which benefits from a structure shared by aggregated and atomized subproblems, which latter allows us to derive conditions such that the scenarios (either finite or infinite number of them) belonging to a certain element $P$, yield the same expected value of optimal solutions, as if we solve the aggregated Problem~\cref{secondst_agg}.

	\subsection{Relations between atomized and aggregated subproblems}
	
	As a first step, we define the relation between subproblems~\cref{secondst} and aggregated subproblems~\cref{secondst_agg}. Indeed, \cref{feasibility_solutions} shows how a feasible solution of \cref{secondst_agg} can be constructed using information from the optimal solution of \cref{secondst}.
	
	\begin{lemma}
		\label{feasibility_solutions}
		Let $\bar{x}\in \mathcal{X}$ and $P\subseteq \Omega$, and let $\hat{y}^\xi$ be the set of optimal solutions of $\mathcal{Q}(\bar{x},\xi)$ for $\xi \in P$. Then, $\hat{y}^P:= \mathbb{E}\big[ \hat{y}^\xi|P\big]$ is a feasible solution for $\mathcal{Q}(\bar{x}, \mathbb{E}\left[\xi|P\right])$.
	\end{lemma}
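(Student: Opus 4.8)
The plan is to verify directly that $\hat{y}^P$ satisfies the two constraints defining the feasible region of $\mathcal{Q}(\bar{x}, \mathbb{E}[\xi|P])$, namely the equality $W\hat{y}^P = h^P - T^P\bar{x}$ and the non-negativity $\hat{y}^P \ge 0$. The only ingredients I would use are that each $\hat{y}^\xi$ is feasible for $\mathcal{Q}(\bar{x},\xi)$ and the linearity of the conditional expectation. First I would record that, by the definition of $\mathcal{Q}(\bar{x},\xi)$ in~\cref{secondst}, every optimal (hence feasible) solution satisfies $W\hat{y}^\xi = h^\xi - T^\xi \bar{x}$ and $\hat{y}^\xi \ge 0$ for all $\xi \in P$.

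Then I would take conditional expectations given $P$. Since $W$ is a fixed (deterministic) recourse matrix, it commutes with the expectation operator, so $W\hat{y}^P = W\,\mathbb{E}[\hat{y}^\xi\,|\,P] = \mathbb{E}[W\hat{y}^\xi\,|\,P]$. Substituting the equality constraint and using linearity together with the fact that $\bar{x}$ is deterministic, this equals $\mathbb{E}[h^\xi - T^\xi\bar{x}\,|\,P] = \mathbb{E}[h^\xi\,|\,P] - \mathbb{E}[T^\xi\,|\,P]\,\bar{x} = h^P - T^P\bar{x}$, which is precisely the required equality. For the sign constraint, since $\hat{y}^\xi \ge 0$ for every $\xi \in P$, monotonicity of the conditional expectation yields $\hat{y}^P = \mathbb{E}[\hat{y}^\xi\,|\,P] \ge 0$. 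Together these two facts exhibit $\hat{y}^P$ as feasible.

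I expect the only delicate point to be the well-definedness of $\hat{y}^P = \mathbb{E}[\hat{y}^\xi\,|\,P]$ in the continuous case: this requires that $\xi \mapsto \hat{y}^\xi$ be taken as a measurable and integrable selection from the (set-valued) optimal-solution map, which is how I would read the phrase \emph{set of optimal solutions} in the statement. In the discrete setting the expectation reduces to a finite convex combination and is immediate; in the general case the integrability is inherited from the standing assumption that $\mathcal{Q}(\bar{x},\xi)$ is feasible and bounded throughout $\Omega$, and I would invoke a standard measurable-selection argument to guarantee that $\hat{y}^P$ is a genuine vector in $\mathbb{R}^n$. Once this is secured, the computation above goes through verbatim, and no further hypotheses on the distribution of $\xi$ are needed.
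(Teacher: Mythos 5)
Your proof is correct and follows essentially the same route as the paper: take the equality constraint $W\hat{y}^\xi = h^\xi - T^\xi\bar{x}$, apply conditional expectation over $P$, and use linearity together with the fact that $W$ and $\bar{x}$ are deterministic to conclude $W\hat{y}^P = h^P - T^P\bar{x}$. You are in fact slightly more thorough than the paper, which leaves both the non-negativity $\hat{y}^P \ge 0$ and the measurability/integrability of the selection $\xi \mapsto \hat{y}^\xi$ implicit.
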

	
	\begin{proof}

		Since $W\hat{y}^\xi = h^\xi - T^\xi \bar{x}$ for every $\xi\in P$,  
		\begin{eqnarray*}
			\int_\Omega W \hat{y}^\xi \ d\mathbb{P}(\xi|P) &=& \int_\Omega \left[h^\xi - T^\xi \bar{x}\right] \ d\mathbb{P}(\xi|P)\\
			W \int_\Omega \hat{y}^\xi \ d\mathbb{P}(\xi|P) & = & \int_\Omega h^\xi \ d\mathbb{P}(\xi|P) - \left(\int_\Omega T^\xi  \ d\mathbb{P}(\xi|P)\right) \bar{x}\\
			W \mathbb{E}\left[\hat{y}^\xi|P\right] &=& \mathbb{E}\left[h^\xi|P\right] - \mathbb{E}\left[T^\xi|P\right]\bar{x}.
		\end{eqnarray*}
Hence, $\hat{y}^P$ is a feasible solution for $\mathcal{Q}(\bar{x}, \mathbb{E}\left[\xi|P\right])$.
	\end{proof}

	Since these second-stage subproblems consider only continuous variables, we can introduce a dual formulations for subproblems \cref{secondst} and \cref{secondst_agg}, respectively,
	
	\begin{equation}
	\mathcal{Q}^D(x,\xi) := \max \left\lbrace (h^\xi-T^\xi x)^\top \lambda^\xi ~|~ W^\top \lambda \leq q \right\rbrace 
	\label{Dualsp}
	\end{equation}
	and
	\begin{equation}
	\mathcal{Q}^D\left(x,P\right) := \max \left\lbrace (h^P-T^P x)^\top \lambda^P ~|~ W^\top \lambda^P \leq q \right\rbrace.
	\label{secondst_agg_dual}
	\end{equation}
Indices $\xi$ and $P$ on dual variable $\lambda$  distinguish between atomized and aggregated subproblems.

Similarly to the primal case, we can construct a feasible solution for problem \cref{secondst_agg_dual} based on the optimal solutions of \cref{Dualsp}.
	
	\begin{lemma}
		\label{dual_feas_sol}
		Let $\bar{x}\in \mathcal{X}$ and $P\subseteq \Omega$, and let $\hat{\lambda}^\xi$ be the optimal solution of problem $\mathcal{Q}^D(\bar{x},\xi)$ for $\xi \in P$. Then, $\hat{\lambda}^P:= \mathbb{E}\big[ \hat{\lambda}^\xi|P\big]$ is a feasible solution for $\mathcal{Q}^D(\bar{x},P)$
	\end{lemma}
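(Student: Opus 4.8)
The plan is to exploit the key structural feature that distinguishes the dual problems from the primal ones: the feasible region $\{\lambda : W^\top \lambda \leq q\}$ of both $\mathcal{Q}^D(\bar{x},\xi)$ and $\mathcal{Q}^D(\bar{x},P)$ is identical and, in particular, independent of $\xi$ and of $P$. Unlike the primal situation of \Cref{feasibility_solutions}, where the right-hand side of the equality constraint varied with $\xi$ and had to be reconciled with its aggregated counterpart $h^P - T^P\bar{x}$, here the randomness enters only through the objective. Consequently, establishing feasibility reduces to showing that the averaging operation $\mathbb{E}[\cdot\,|P]$ keeps us inside this one fixed polyhedron.

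First I would record that each $\hat{\lambda}^\xi$, being optimal for $\mathcal{Q}^D(\bar{x},\xi)$, is in particular feasible, so that $W^\top \hat{\lambda}^\xi \leq q$ holds componentwise for every $\xi \in P$. I would then integrate this inequality against the conditional measure $\mathbb{P}(\cdot\,|P)$, pulling the deterministic operator $W^\top$ out of the integral by linearity and using that the constant vector $q$ integrates to itself:
\begin{equation*}
W^\top \mathbb{E}\left[\hat{\lambda}^\xi \mid P\right] = \mathbb{E}\left[W^\top \hat{\lambda}^\xi \mid P\right] \leq \mathbb{E}\left[q \mid P\right] = q.
\end{equation*}
This yields $W^\top \hat{\lambda}^P \leq q$, i.e. $\hat{\lambda}^P$ is feasible for $\mathcal{Q}^D(\bar{x},P)$, as claimed.

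The only point requiring care — and what I regard as the main, though mild, obstacle — is justifying that the inequality is preserved under integration. This rests on monotonicity of the conditional expectation applied coordinatewise: since each component of $q - W^\top \hat{\lambda}^\xi$ is nonnegative for every $\xi \in P$, its conditional expectation is nonnegative as well. Implicit here is that the map $\xi \mapsto \hat{\lambda}^\xi$ be measurable, so that $\mathbb{E}[\hat{\lambda}^\xi|P]$ is well defined; this is the setting under which the statement is posed, and under it no further argument is needed. Equivalently, one may phrase the whole proof geometrically: $\{\lambda : W^\top \lambda \leq q\}$ is convex and $\hat{\lambda}^P$ is a continuous convex combination of its members, hence lies in it.
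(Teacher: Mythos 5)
Your proof is correct and follows essentially the same route as the paper: both observe that each $\hat{\lambda}^\xi$ satisfies $W^\top \hat{\lambda}^\xi \leq q$, pull the deterministic $W^\top$ through the conditional expectation by linearity, and use monotonicity of integration together with the fact that the constant $q$ integrates to itself. Your additional remarks on measurability and the convexity rephrasing are sound but not needed beyond what the paper's own argument already does.
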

	
	\begin{proof}

		Since $W^\top \hat\lambda^\xi \leq q$ for all $\xi\in P$, 
		\[W^\top \hat{\lambda}^P = W^\top \int_\Omega \hat\lambda^\xi d\mathbb{P}(\xi|P) = \int_\Omega W^\top\hat\lambda^\xi d\mathbb{P}(\xi|P) \leq  \int_\Omega q  \cdot d\mathbb{P}(\xi|P) = q. \]
		\label{prop2conc}
Hence, $\hat\lambda^P$ is a feasible solution for problem $\mathcal{Q}^D(\bar{x},P)$
		whenever set $P$ has positive measure.
	\end{proof}
	
	\subsection{Construction of an optimal partition}

	The previous framework provides the set of tools necessary to set $\mathcal{Q}(\bar{x},\mathbb{E}\left[\xi|P\right])$ as a lower bound of $\mathbb{E}\left[\mathcal{Q}(\bar{x},\xi)|P\right]$; furthermore, we identify the conditions on $\mathcal{P}$ to make this bound tight.
	
	\begin{proposition}
		Let $\bar{x}\in\mathcal{X}$ and $P\subseteq \Omega$, such that $\mathcal{Q}(\bar{x},\xi)$ is feasible for all $\xi\in P$, and let $\hat{\lambda}^\xi$ be its dual optimal solutions. If $\hat{\lambda}^\xi$ for $\xi\in P$ satisfies
		\begin{subequations}
			\label{HypoLemmaGen}
			\begin{eqnarray}
				\label{HypoLemmaGen1}
				\Big(\mathbb{E}\big[ h^\xi | P \big]\Big) ^\top \Big( \mathbb{E}\big[ \hat{\lambda}^\xi | P \big]\Big) &=& \mathbb{E}\left[ \left.h^\xi\right.^\top \hat{\lambda}^\xi \Big|  P \right]\\
				\label{HypoLemmaGen2}
				\bar{x}^\top\Big(\mathbb{E}\big[ T^\xi | P \big]^\top \mathbb{E}\big[ \hat{\lambda}^\xi | P \big]\Big) &=& \bar{x}^\top \mathbb{E}\left[ \left.T^\xi\right.^\top \hat{\lambda}^\xi \Big| P \right]	
			\end{eqnarray}
		\end{subequations}
	then, 
		\[ \mathcal{Q}\big(\bar{x},\mathbb{E}\left[\xi|P\right] \big) = \mathbb{E}\big[ \mathcal{Q}(\bar{x},\xi) | P \big] \]
		\label{GenLemma}
	\end{proposition}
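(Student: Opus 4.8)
The plan is to establish the identity $\mathcal{Q}(\bar{x},\mathbb{E}[\xi|P]) = \mathbb{E}[\mathcal{Q}(\bar{x},\xi)|P]$ by sandwiching both quantities between matching lower and upper bounds, using the two feasibility lemmas together with linear-programming strong duality. First I would prove the inequality $\mathcal{Q}(\bar{x},\mathbb{E}[\xi|P]) \le \mathbb{E}[\mathcal{Q}(\bar{x},\xi)|P]$. By \Cref{feasibility_solutions}, the averaged primal solution $\hat{y}^P = \mathbb{E}[\hat{y}^\xi|P]$ is feasible for the aggregated problem \cref{secondst_agg}, so $\mathcal{Q}(\bar{x},\mathbb{E}[\xi|P]) \le q^\top \hat{y}^P = q^\top \mathbb{E}[\hat{y}^\xi|P] = \mathbb{E}[q^\top \hat{y}^\xi|P] = \mathbb{E}[\mathcal{Q}(\bar{x},\xi)|P]$, where the last equality uses optimality of $\hat{y}^\xi$ for each atomized subproblem. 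Notice this direction requires neither hypothesis \cref{HypoLemmaGen1} nor \cref{HypoLemmaGen2}; it is the aggregation-always-gives-a-lower-bound fact.

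\textbf{The reverse inequality via the dual.}

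The harder direction, which is where the hypotheses enter, is $\mathcal{Q}(\bar{x},\mathbb{E}[\xi|P]) \ge \mathbb{E}[\mathcal{Q}(\bar{x},\xi)|P]$. Here I would work on the dual side. By \Cref{dual_feas_sol}, the averaged dual solution $\hat{\lambda}^P = \mathbb{E}[\hat{\lambda}^\xi|P]$ is feasible for the aggregated dual \cref{secondst_agg_dual}, so by weak duality applied to the aggregated pair,
\[
\mathcal{Q}(\bar{x},\mathbb{E}[\xi|P]) \;\ge\; (h^P - T^P\bar{x})^\top \hat{\lambda}^P \;=\; \big(\mathbb{E}[h^\xi|P] - \mathbb{E}[T^\xi|P]\,\bar{x}\big)^\top \mathbb{E}[\hat{\lambda}^\xi|P].
\]
The goal is then to show this last expression equals $\mathbb{E}[\mathcal{Q}(\bar{x},\xi)|P]$. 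Expanding the right-hand side gives $\mathbb{E}[h^\xi|P]^\top\mathbb{E}[\hat{\lambda}^\xi|P] - \bar{x}^\top\mathbb{E}[T^\xi|P]^\top\mathbb{E}[\hat{\lambda}^\xi|P]$, and this is precisely where \cref{HypoLemmaGen1} and \cref{HypoLemmaGen2} do their work: they let me replace each product-of-expectations by the corresponding expectation-of-products, turning the expression into $\mathbb{E}[(h^\xi)^\top\hat{\lambda}^\xi|P] - \mathbb{E}[\bar{x}^\top(T^\xi)^\top\hat{\lambda}^\xi|P] = \mathbb{E}[(h^\xi - T^\xi\bar{x})^\top\hat{\lambda}^\xi|P]$. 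By strong duality for each atomized subproblem, $(h^\xi - T^\xi\bar{x})^\top\hat{\lambda}^\xi = \mathcal{Q}^D(\bar{x},\xi) = \mathcal{Q}(\bar{x},\xi)$, so the whole expression collapses to $\mathbb{E}[\mathcal{Q}(\bar{x},\xi)|P]$, yielding the desired lower bound.

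\textbf{Combining and the main obstacle.}

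Chaining the two inequalities forces equality throughout, which completes the proof. The conceptual heart of the argument is the observation that the hypotheses \cref{HypoLemmaGen} are exactly the conditions under which the linearity of expectation ``commutes'' with the bilinear objective terms $h^\top\lambda$ and $x^\top T^\top\lambda$, which in general it does not; I expect this bookkeeping step—verifying that \cref{HypoLemmaGen1,HypoLemmaGen2} convert the aggregated dual objective exactly into the expected atomized objective—to be the main technical point to state carefully. A secondary concern is regularity: I must ensure the relevant expectations exist and are finite and that strong duality holds for the atomized subproblems, which follows from the standing assumption that $\mathcal{Q}(\bar{x},\xi)$ is feasible and bounded on $\Omega$ (so the dual optima $\hat{\lambda}^\xi$ exist and attain $\mathcal{Q}(\bar{x},\xi)$), together with $P$ having positive measure so that conditional expectations are well defined.
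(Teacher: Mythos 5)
Your proposal is correct and takes essentially the same approach as the paper: the core of both arguments is averaging the atomized dual optima into a feasible solution of the aggregated dual (\cref{dual_feas_sol}), bounding $\mathcal{Q}(\bar{x},\mathbb{E}\left[\xi|P\right])$ from below via weak duality, and using conditions \cref{HypoLemmaGen} together with strong duality of the atomized subproblems to identify that bound with $\mathbb{E}\big[\mathcal{Q}(\bar{x},\xi)|P\big]$. The only immaterial difference is in the easy inequality, where you use the averaged primal solution from \cref{feasibility_solutions}, while the paper proves convexity of $\mathcal{Q}(\bar{x},\cdot)$ and invokes Jensen's inequality --- two interchangeable forms of the same argument.
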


	\begin{proof}
		We first note that $\mathcal{Q}(\bar{x},\xi)$ is a convex function on $\xi$. Let us consider the problem $f(\xi)=\min\{q^\top y|Wy = \xi\}$. Now, we can take random values of $b$ namely $b_1$ and $b_2$, with $y_1^\star$ and $y_2^\star$ as their respective optimal solutions. If $b_\beta$ is a convex combination of $b_1$ and $b_2$, then $\beta y_1^\star+(1-\beta)y_2^\star$ is a feasible solution of $f(b_\beta)$. If $y_\beta^\star$ is the optimal solution of $f(b_\beta)$, we can build
		\begin{eqnarray*}
			f(b_\beta)&=&q^\top y_\beta^\star \leq q^\top(\beta y_1^\star+(1-\beta)y_2^\star)\\
			&=& \beta q^\top y_1^\star + (1-\beta) q^\top y_2^\star = \beta f(b_1) + (1-\beta)f(b_2)
		\end{eqnarray*}	
		which verifies $\mathcal{Q}(\bar{x},\xi)$ is convex on $\xi$. Hence, by applying Jensen's inequality, we obtain $\mathcal{Q}(\bar{x},\mathbb{E}\left[\xi|P\right]) \leq \mathbb{E}\left[\mathcal{Q}(\bar{x},\xi)|P\right]$.
		
		On the other hand, according to \cref{dual_feas_sol}, we know that $\hat{\lambda}^P:= \mathbb{E}[\hat \lambda^\xi|P]$ is a feasible solution of $\mathcal{Q}^D(\bar{x},P)$; thus,
		\begin{align*}
		\mathcal{Q}^D\big(\bar{x},P \big) = \mathcal{Q}(\bar{x},\mathbb{E}\left[\xi|P\right]) & \geq \Big( \mathbb{E}\big[ h^\xi | P \big] - \mathbb{E}\big[ T^\xi | P \big]\bar{x} \Big)^\top  \Big(\mathbb{E}\big[ \hat{\lambda}^\xi |  P \big]\Big)\\ 
		& = \Big(\mathbb{E}\big[ h^\xi | P \big]\Big)^\top \Big( \mathbb{E}\big[ \hat{\lambda}^\xi | P \big]\Big) - \bar{x}^\top\Big(\mathbb{E}\big[ T^\xi | P \big]\Big)^\top \Big( \mathbb{E}\big[ \hat{\lambda}^\xi | P \big]\Big)
		\end{align*}
		
		Since $\hat{\lambda}^P$ satisfies conditions \cref{HypoLemmaGen}, by means of the linearity of the expectation, we obtain
		\begin{align*}
		\mathcal{Q}(\bar{x},\mathbb{E}\left[\xi|P\right]) & \ge \Big(\mathbb{E}\left[ \left(h^\xi\right)^\top \hat{\lambda}^\xi \Big| P \right]\Big) - \bar{x}^\top\Big(\mathbb{E}\left[ \left(T^\xi\right)^\top \hat{\lambda}^\xi \Big| P \right]\Big) \\
		& =  \mathbb{E}\big[ \mathcal{Q}(\bar{x},\xi) | P \big]
		\end{align*}
	\end{proof}

	\begin{theorem} \label{cor:partition_implies_original}
		Let	$x^*$ be an optimal solution of problem
	\[		\min_{x\in\mathcal{X}} \left\{ c^\top x +  \sum_{P\in\mathcal{P^*}}\mathcal{Q}\left(x,\mathbb{E}\left[\xi|P\right]\right) \cdot \mathbb{P}(P)  \right\}  \]
		where $\mathcal{P}^*$ is a numerable partition of $\Omega$ such that for each $P\in\mathcal{P}^*$, the optimal dual variables of $\mathcal{Q}(x^*,\xi)$ for $\xi\in P$ satisfy conditions \cref{HypoLemmaGen}.  Then, $x^*$ is also an optimal solution of problem 
		\[	\min_{x\in\mathcal{X}} \left\{ c^\top x + \mathbb{E} \left[ \mathcal{Q}(x,\xi)\right] \right\}. \]
	\end{theorem}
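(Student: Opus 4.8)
The plan is to sandwich the original objective between the aggregated objective and the latter's minimum value, and to use \cref{GenLemma} to collapse the sandwich into equalities precisely at $x^*$. To organize this, write
\[
v(x) := c^\top x + \mathbb{E}\big[\mathcal{Q}(x,\xi)\big],
\qquad
v_{\mathcal{P}^*}(x) := c^\top x + \sum_{P\in\mathcal{P}^*}\mathcal{Q}\big(x,\mathbb{E}[\xi|P]\big)\,\mathbb{P}(P),
\]
so that the hypothesis says $x^*$ minimizes $v_{\mathcal{P}^*}$ over $\mathcal{X}$, while the conclusion is that $x^*$ minimizes $v$. The whole argument then reduces to two facts: (i) $v_{\mathcal{P}^*}$ lies pointwise below $v$, and (ii) the two objectives agree at $x^*$.

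First I would establish the global lower bound $v_{\mathcal{P}^*}(x)\le v(x)$ for every $x\in\mathcal{X}$. Fix such an $x$; if $\mathcal{Q}(x,\xi)$ is infeasible on a set of positive measure then $v(x)=+\infty$ and the inequality is trivial, so assume feasibility almost everywhere. For each element $P$, the convexity of $\xi\mapsto\mathcal{Q}(x,\xi)$ proved inside \cref{GenLemma} (the recourse value is convex in the right-hand side $h^\xi-T^\xi x$, which is affine in $(h^\xi,T^\xi)$, and $\mathbb{E}[\xi|P]$ corresponds to $(\mathbb{E}[h^\xi|P],\mathbb{E}[T^\xi|P])$) together with Jensen's inequality gives $\mathcal{Q}(x,\mathbb{E}[\xi|P])\le\mathbb{E}[\mathcal{Q}(x,\xi)\mid P]$. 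Multiplying by $\mathbb{P}(P)$, summing over the numerable partition $\mathcal{P}^*$, and applying the tower property $\sum_{P\in\mathcal{P}^*}\mathbb{E}[\mathcal{Q}(x,\xi)\mid P]\,\mathbb{P}(P)=\mathbb{E}[\mathcal{Q}(x,\xi)]$ yields $v_{\mathcal{P}^*}(x)\le v(x)$.

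Next I would prove tightness at $x^*$. By hypothesis, for each $P\in\mathcal{P}^*$ the optimal dual variables of $\mathcal{Q}(x^*,\xi)$ satisfy \cref{HypoLemmaGen}, so \cref{GenLemma} applies with $\bar{x}=x^*$ and delivers the equality $\mathcal{Q}(x^*,\mathbb{E}[\xi|P])=\mathbb{E}[\mathcal{Q}(x^*,\xi)\mid P]$ for every $P$. Summing against $\mathbb{P}(P)$ and invoking the tower property once more upgrades the inequality of the previous step to the equality $v_{\mathcal{P}^*}(x^*)=v(x^*)$. Chaining the two facts, for arbitrary $x\in\mathcal{X}$,
\[
v(x)\ \ge\ v_{\mathcal{P}^*}(x)\ \ge\ v_{\mathcal{P}^*}(x^*)\ =\ v(x^*),
\]
where the first inequality is the pointwise lower bound, the second is optimality of $x^*$ for the aggregated problem, and the equality is tightness; hence $x^*$ minimizes $v$.

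The main obstacle is the first step rather than the second: \cref{GenLemma} only guarantees equality at $x^*$, so the pointwise domination $v_{\mathcal{P}^*}\le v$ must be argued separately and uniformly in $x$, relying on convexity of the recourse and a legitimate exchange of expectation with the (possibly countably infinite) sum over $\mathcal{P}^*$. Care is needed here with elements of measure zero and with values of $x$ for which the recourse is infeasible, so that both Jensen's inequality and the tower property are applied on the right conditional measures; once this uniform bound is in place, the optimality conclusion follows immediately from the sandwich.
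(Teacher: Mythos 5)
Your proof is correct and takes essentially the same route as the paper: both rest on the Jensen-inequality lower bound $\mathcal{Q}(x,\mathbb{E}[\xi|P])\le\mathbb{E}[\mathcal{Q}(x,\xi)\mid P]$ combined with the law of total expectation, plus \cref{GenLemma} to obtain equality at $x^*$. The only cosmetic difference is that you state the domination $v_{\mathcal{P}^*}\le v$ pointwise and chain the inequalities directly, whereas the paper evaluates that same bound at an optimizer $\hat{x}$ of the original problem and compares the two minimum values; the ingredients are identical.
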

	\begin{proof} 
		By the laws of total expectation, we know that for any numerable partition $\mathcal{P}$ of $\Omega$, 
		\[ \mathbb{E} \left[ \mathcal{Q}(x,\xi)\right] = \sum_{P\in\mathcal{P}}\mathbb{E}\left[\mathcal{Q}\left(x,\xi\right)|P\right] \cdot \mathbb{P}(P) \]
		In particular, for $x^*$ and $\mathcal{P}^*$, according to \cref{GenLemma}, we obtain
		\[	 c^\top x^* + \mathbb{E} \left[ \mathcal{Q}(x^*,\xi)\right] =  c^\top x^* +  \sum_{P\in\mathcal{P}^*}\mathcal{Q}\left(x^*,\mathbb{E}\left[\xi|P\right]\right) \cdot \mathbb{P}(P),  \]
		hence,
		\[	 \min_{x\in\mathcal{X}} \left\{c^\top x + \mathbb{E} \left[ \mathcal{Q}(x,\xi)\right] \right\}  \leq \min_{x\in\mathcal{X}} \left\{  c^\top x +  \sum_{P\in\mathcal{P}^*}\mathcal{Q}\left(x,\mathbb{E}\left[\xi|P\right]\right) \cdot \mathbb{P}(P)\right\}.  \]

On the other hand, if $\hat{x}$ is the optimal solution of Problem~\cref{TSSP}, then 
\begin{align*}
	c^\top \hat{x} + \mathbb{E} \left[ \mathcal{Q}(\hat{x},\xi)\right] 
	&=     c^\top \hat{x} +  \sum_{P\in\mathcal{P}^*}\mathbb{E}\left[\mathcal{Q}\left(\hat{x},\xi\right)|P\right] \cdot \mathbb{P}(P) \\
	&\geq  c^\top \hat{x} +  \sum_{P\in\mathcal{P}^*}\mathcal{Q}\left(\hat{x},\mathbb{E}\left[\xi|P\right]\right) \cdot \mathbb{P}(P) \\
	&\geq  \min_{x\in\mathcal{X}} \left\{  c^\top x +  \sum_{P\in\mathcal{P}^*}\mathcal{Q}\left(x,\mathbb{E}\left[\xi|P\right]\right) \cdot \mathbb{P}(P)\right\}, 
\end{align*}
where the first equality is true by the laws of total expectation and the second inequality is given by Jensen's inequality and the convexity of $\mathcal{Q}(\hat{x},\cdot)$.
\end{proof}

Note that this partition always exists, as presented in the following corollary.

\begin{corollary} \label{cor:original_implies_partition}
	If	$x^*$ is an optimal solution of problem
\[			\min_{x\in\mathcal{X}} \left\{ c^\top x + \mathbb{E} \left[ \mathcal{Q}(x,\xi)\right] \right\} \]
then there exists a finite partition $\mathcal{P}^*$ of $\Omega$ such that

	\[ c^\top x^* + \mathbb{E} \left[ \mathcal{Q}(x^*,\xi)\right] 
	=  c^\top x^* +  \sum_{P\in\mathcal{P}^*}\mathcal{Q}\left(x^*,\mathbb{E}\left[\xi|P\right]\right) \cdot \mathbb{P}(P) \]
	\end{corollary}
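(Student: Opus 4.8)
The plan is to reduce the identity, via \Cref{GenLemma} and the law of total expectation, to the construction of a \emph{finite} partition $\mathcal{P}^*$ each of whose elements satisfies the hypotheses \cref{HypoLemmaGen} at the fixed optimal point $x^*$. Indeed, \Cref{GenLemma} gives $\mathcal{Q}(x^*,\mathbb{E}[\xi|P]) = \mathbb{E}[\mathcal{Q}(x^*,\xi)|P]$ on any $P$ where \cref{HypoLemmaGen} holds; multiplying by $\mathbb{P}(P)$, summing over $P\in\mathcal{P}^*$, and using $\mathbb{E}[\mathcal{Q}(x^*,\xi)] = \sum_{P\in\mathcal{P}^*}\mathbb{E}[\mathcal{Q}(x^*,\xi)|P]\,\mathbb{P}(P)$ yields exactly the claimed equation. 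So the whole content is to produce such a partition.

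The observation I would exploit is that the dual feasible region $D := \{\lambda : W^\top\lambda \le q\}$ is the \emph{same} polyhedron for every $\xi$, since the recourse is fixed; only the linear objective $(h^\xi - T^\xi x^*)^\top\lambda$ of $\mathcal{Q}^D(x^*,\xi)$ varies. Assuming $D$ is pointed (equivalently, $W$ has full row rank), $D$ has finitely many vertices $v_1,\dots,v_K$, and since $x^*$ lies in the effective domain the dual optimum of $\mathcal{Q}^D(x^*,\xi)$ is finite and attained at some vertex for almost every $\xi$. Fixing an arbitrary order on the vertices to break ties, I would set
\[ P_k := \bigl\{ \xi\in\Omega : v_k \text{ is the first vertex, in the chosen order, that is optimal for } \mathcal{Q}^D(x^*,\xi) \bigr\}, \]
and take $\mathcal{P}^* := \{P_k : \mathbb{P}(P_k) > 0\}$, a partition of $\Omega$ into at most $K$ pieces.

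On each $P_k$ the selected dual optimizer is the constant $\hat{\lambda}^\xi \equiv v_k$, so both conditions in \cref{HypoLemmaGen} collapse to pulling a constant out of a conditional expectation. For instance $\mathbb{E}[(h^\xi)^\top \hat{\lambda}^\xi | P_k] = \mathbb{E}[(h^\xi)^\top v_k | P_k] = (\mathbb{E}[h^\xi | P_k])^\top v_k = (\mathbb{E}[h^\xi | P_k])^\top (\mathbb{E}[\hat{\lambda}^\xi | P_k])$, which is \cref{HypoLemmaGen1}, and the identical computation with $T^\xi$ gives \cref{HypoLemmaGen2}. Hence \cref{HypoLemmaGen} holds on every $P_k$, and applying \Cref{GenLemma} piecewise finishes the reduction from the first paragraph.

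The step I expect to be the main obstacle is ensuring the pieces $P_k$ are measurable, so that $\mathbb{P}(P_k)$ and the conditional expectations are well defined, together with the standing assumption that $D$ is pointed. For measurability I would note that the event ``$v_k$ is optimal for $\xi$'' is cut out by the finitely many inequalities $(h^\xi - T^\xi x^*)^\top v_k \ge (h^\xi - T^\xi x^*)^\top v_j$, $j=1,\dots,K$, which are affine in the measurable data $(h^\xi,T^\xi)$; each such event, and hence each difference $P_k$, is therefore measurable. If $D$ fails to be pointed one must replace vertices by a finite set of basic optimal solutions (selecting $m$ linearly independent active constraints, or working modulo the lineality space); the selection is still governed by finitely many affine-in-$\xi$ inequalities, so finiteness and measurability persist, but extra care is needed to guarantee the optimum is attained within the chosen finite candidate set.
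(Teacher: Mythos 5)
Your proposal follows essentially the same route as the paper's own proof: since the recourse is fixed, the dual feasible polyhedron $W^\top\lambda\le q$ is common to all $\xi$, its finitely many extreme points induce a finite partition of $\Omega$ on which the optimal dual solution is constant, constant duals satisfy \cref{HypoLemmaGen} trivially, and then \cref{GenLemma} together with the law of total expectation gives the identity. Your added care about tie-breaking, measurability of the pieces, and pointedness of the dual polyhedron addresses details the paper glosses over, but it is the same argument.
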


	\begin{proof}
		Note that the dual feasible solutions $\lambda$ of $Q(x^*,\xi)$ must satisfy $W^\top \lambda \leq q$.
Hence, for each $\xi\in\Omega$, we have an associated  extreme point of $W^\top \lambda \leq q$ that is an optimal dual solution of $Q(x^*,\xi)$. This result induces a finite partition $\mathcal{P^*}$ of $\Omega$ such that all $\xi\in P$ have the same dual optimal solution of $Q(x^*,\xi)$. Hence, the values all satisfy the conditions of \cref{GenLemma}, and 
\begin{align*}
	c^\top x^* + \mathbb{E} \left[ \mathcal{Q}(x^*,\xi)\right] 
	&=     c^\top x^* +  \sum_{P\in\mathcal{P}^*}\mathbb{E}\left[\mathcal{Q}\left(x^*,\xi\right)|P\right] \cdot \mathbb{P}(P) \\
	&=  c^\top x^* +  \sum_{P\in\mathcal{P}^*}\mathcal{Q}\left(x^*,\mathbb{E}\left[\xi|P\right]\right) \cdot \mathbb{P}(P).
\end{align*}
\end{proof}

We finish this section by noting some differences regarding the original APM proposed in~\cite{song2015adaptive}.  The most relevant aspect of this  proof is the possibility to extend it to the case of continuous probability space.  However, this proof also shows that the method can be applied to any set of primal variables $x\in\mathcal{X}$. Moreover, the condition proposed in the original paper to aggregate scenarios in such that all dual variables $\lambda^\xi$ for $\xi\in P$ on each subset $P\subset\Omega$ must have the same value. This is a particular case which satisfies the conditions of \cref{GenLemma} by means of the linearity of the expected value. Finally, \cite{song2015adaptive} establishes that this criterion is required to have the equality between the value of the aggregated problem and the expected value of the atomized subproblems. Nonetheless, the presented conditions of \cref{GenLemma} provides a framework where less demanding conditions might be applied to aggregate/disaggregate scenarios, e.g., degenerated subproblems with multiple optimal dual solutions.

\section{Algorithm implementation}\label{sec:algorithm}

The idea of the method is to iteratively converge to a partition $\mathcal{P}$ satisfying the conditions of \cref{GenLemma}. Initially, we start with a trivial partition ($\mathcal{P}=\{\Omega\}$) and split the partition based on the duals of the subproblems. This split procedure is problem dependent. At each iteration, the algorithm provides a lower bound (the optimal value of the aggregated problem) and, potentially, an upper bound. The upper bound can be computed by solving the expected value of the subproblem, which is easy to compute in the discrete case by solving the subproblem for each scenario independently. However, the computation can be difficult for continuous distributions. The algorithm is presented in \cref{alg:GAPM}.

\begin{algorithm}[tbhp]
	\begin{algorithmic}[1]

		\REQUIRE {A stopping threshold $\epsilon$ and an initial partition $\mathcal{P}^{(0)}$ of $\Omega$} 
	\ENSURE {An optimal partition $\mathcal{P^*}$ to attain the optimal solution $x^\star$ of problem \cref{TSSP}}

	\STATE Set $t:=0$, $z_L^{(0)}:=-\infty$ and $z_U^{(0)}=\infty$.
	\LOOP
		\STATE $t=t+1$
		\STATE Solve Problem \cref{AGGTSS} for partition $\mathcal{P}^{(t)}$ and assign its optimal value to the lower bound $z_L^{(t)}$ and its optimal solution to $\bar{x}^{(t)}$
		\STATE\label{alg:ub} If possible, compute the upper bound $z_U^{(t)}=c^\top \bar{x}^{(t)} + \mathbb{E}[\mathcal{Q}(\bar{x}^{(t)} ,\xi)]$
		\STATE Solve subproblems \cref{secondst_agg} for every $P\in \mathcal{P}^{(t)}$
		\IF{$z_U^{(t)} - z_L^{(t)} < \epsilon$}
				\STATE \textbf{exit} 
		\ENDIF	
		\STATE\label{alg:disagg} Run disaggregation procedure to split uncertain region and obtain $\mathcal{P}^{(t+1)}$
    \label{alg:compDuals}\IF{ $\mathcal{P}^{(t+1)}$ satisfies \cref{HypoLemmaGen}}
      \STATE \textbf{exit}
		\ENDIF
		
	\ENDLOOP
	
	\RETURN optimal solution $x^\star:=\bar{x}^{(t)}$, optimal partition $\mathcal{P}^\star:=\mathcal{P}^{(t)}$ and  optimal value $z_L^{(t)}$
	\end{algorithmic}
	\caption{An iterative implementation of GAPM}
	\label{alg:GAPM}
\end{algorithm}

There are two key steps in the GAPM implementation proposed in \cref{alg:GAPM}. First, to execute \cref{alg:disagg} correctly, we require an additional procedure to split the uncertain region in an appropriate manner; this step is fine-tuned according to the structure of subproblems~\cref{secondst}. We discuss this point in the computational experiments in \Cref{sec:experiments}.
On the other hand, if an upper bound $z_U^{(t)}$ cannot be computed, on \cref{alg:compDuals} we can still check \cref{GenLemma} or even compare composition of consecutive partitions $\mathcal{P}^{(t)}$ and $\mathcal{P}^{(t+1)}$. 

\section{Numerical experiments}\label{sec:experiments}

Since the fundamental novelty of our proposal arises when stochastic parameters have continuous probability distributions, the computational experiments are designed to enlighten algorithmic behaviour on two problems from classic literature with this type of uncertainty. 
For the case of a discrete distribution, we refer the reader to the papers presented in the literature review.

We have divided the computational experiment into two parts. First, we discuss the implementation and results for a classic problem from the stochastic programming literature, the LandS instance, wherein uncertainty is presented in the RHS coefficients. The second problem is the TSSP reformulation of conditional value-at-risk (CVaR) minimization, where the uncertainty appears in the technological coefficient of the first-stage variables $x$.  Both problems have well-defined structures that are useful to define the procedure to split the uncertainty space $\Omega$ at each iteration of the algorithm.

\subsection{Energy planning problem - LandS}

$\mathrm{LandS}$, a classic problem in stochastic programming that is studied for academic purposes, was originally proposed in~\cite{landS}. LandS in an energy planning investment problem, where the goal is to decide the capacities of four new plants while minimizing allocation and operational costs. The set of power plants are supposed to meet uncertain demand of three different electric modes. In the fist stage, some minimum capacities and budget constraints must be satisfied; during the second stage, energy is distributed according to the realization of the uncertain demands. The mathematical formulation is as follows:

\begin{equation}
	\min\limits_{x \geq 0}  \left\{ \sum\limits_{i\in  \mathbb{I}}  c_ix_i + \mathbb{E}\left[ \mathcal{Q} (x,\xi)\right]  :  \sum\limits_{i\in \mathbb{I}}  x_i \geq m, 
	\sum\limits_{i\in \mathbb{I}} c_ix_i \leq b \right\}
\end{equation}
where 
\begin{subequations}
\begin{align}
\mathcal{Q}(x,\xi):=\min\limits_{y \geq 0} ~ & \sum\limits_{i\in \mathbb{I}}\sum\limits_{j\in \mathbb{J}}f_{ij}y_{ij} \\
\text{s.t}~
&\sum\limits_{j\in \mathbb{J}} y_{ij} \leq x_i,\quad \forall~ i\in \mathbb{I}  \label{LandS_capacityconst}\\
&\sum\limits_{i\in \mathbb{I}} y_{ij} \geq d_j^\xi ,\quad \forall~ j\in \mathbb{J}\label{LandS_demandconst}
\end{align}
\label{LandS_sub}
\end{subequations}

The original problem sets up an uncertain demand for $d_1^\xi$ with three scenarios: 3, 5 or 7 units. In this experiment, we assume that $d_1^\xi$ follows a uniform distribution in the interval [3,7], following the ideas from~\cite{linderoth2006empirical}. Remaining demands are considered to be deterministic.

To split the uncertainty space $\Omega$ and compute an upper bound for the optimal value of the problem, we introduce the dual of $\mathcal{Q}(\hat{x},\xi)$ given by
\begin{equation}
\begin{aligned}
\mathcal{Q}^D(\hat{x},\xi) := \max\limits_{\nu,\mu \geq 0}  \sum\limits_{j\in \mathbb{J}} \mu_j d_j^\xi ~ & -   \sum\limits_{i\in \mathbb{I}} \nu_i \hat{x}_i\\
\mu_j - \nu_i \leq f_{ij} & \qquad \forall i\in \mathbb{I}, j\in \mathbb{J}
\end{aligned}
\end{equation}
where $\nu$ and $\mu$ correspond to the dual variables of constraints \cref{LandS_capacityconst} and \cref{LandS_demandconst}, respectively.
Then, given an optimal solution of the subproblem for a given value of $d_1^\xi$, we can use sensitivity analysis to compute a neighbourhood around $d_1^\xi$ in which the dual optimal variables do not change. Moreover, $\mathcal{Q}^D({x},\xi)$ is a non-decreasing piecewise linear function on $d_1^\xi$, so the upper bound of \cref{alg:ub} is easy to compute.

In our experiment, we start with $\mathcal{P}^{(0)}=\{[3,7]\}$ and in each iteration, the partition is refined by dividing the corresponding elements of $\mathcal{P}$, utilizing  the segment extremes of piecewise linear function $\mathcal{Q}^D(\bar{x}^{(t)},\xi)$.

\begin{figure}[tbhp]
	\centering
	\begin{subfigure}[b]{0.50\textwidth}
		\includegraphics[width=\linewidth]{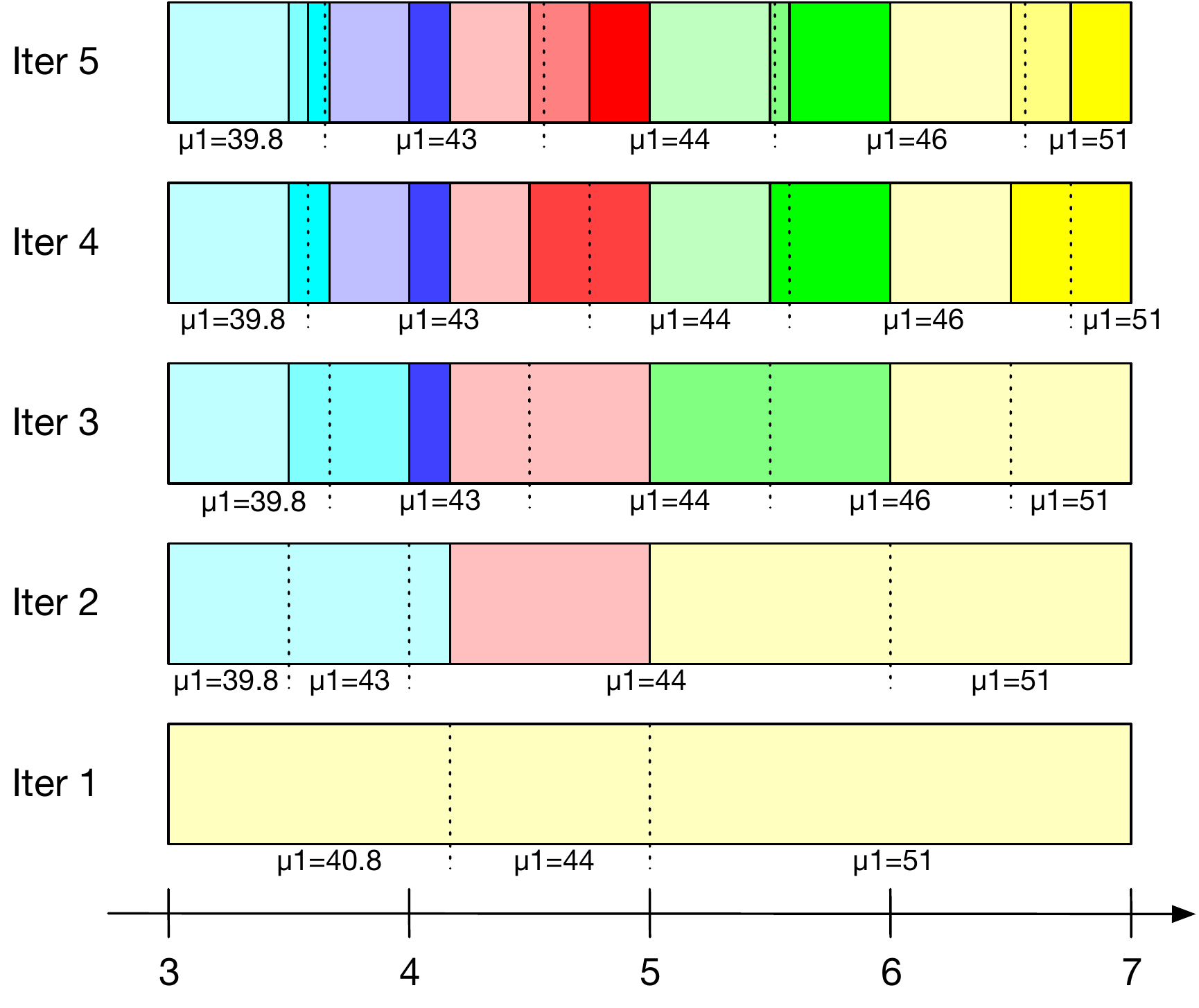}
		\caption{Partition of $\Omega$ in each iteration}\label{subfig:landS_partition}
	\end{subfigure} 
	~%
	\begin{subfigure}[b]{0.48\textwidth}
	\begin{tabular}{|c||c|c|c|c|}
	Iter & $x_1$ & $x_2$ & $x_3$ & $x_4$\\ \hline
1 & 0.833 & 3.000 & 4.167 & 4.000 \\
2 & 2.500 & 3.000 & 3.500 & 3.000 \\
3 & 1.833 & 4.000 & 3.667 & 2.500 \\
4 & 2.000 & 4.167 & 3.583 & 2.250 \\
5 & 1.917 & 4.083 & 3.625 & 2.375 \\
6 & 1.875 & 4.042 & 3.646 & 2.438 \\
\end{tabular} 

\begin{tabular}{|c||c|c|c|c||c|c|c|}
	Iter & LB & UB & Gap \\ \hline
1 & 378.667 & 382.711 & 1.0567\%\\
2 & 380.122 & 381.100 & 0.2567\%\\
3 & 380.601 & 380.844 & 0.0640\%\\
4 & 380.842 & 380.893 & 0.0007\%\\
5 & 380.843 & 380.856 & 0.0004\%\\
6 & 380.844 & 380.847 & 0.0002\%\\
\end{tabular}
\caption{Solution and objective values}
\end{subfigure}
\caption{Iteration details for the LandS example}\label{fig:landS}
\end{figure}

In \cref{fig:landS}, we show the resulting first six iterations of the algorithm.  Columns LB and UB present the current lower bound (objective value of the aggregated problem) and the upper bound (computed by the benefit of $\bar{x}^{(t)}$ and subproblem optimal dual variables), respectively. The column {Gap} shows the relative gap between the current solution and the best upper bound obtained so far.  After a few iterations, we obtain near-optimal solutions for the problem, with a gap close to the computational precision of the optimization software. In \cref{subfig:landS_partition}, we present the partition in each iteration (highlighted by different colours), as well as the segments  (dotted lines) obtained after carrying out the sensitivity analysis. The value under each segment corresponds to the dual variable of the stochastic demand constraint. Notably, these dual values do not change after iteration 3, but the extremes of the corresponding intervals change slightly in each iteration until converging to the optimal solution.

\subsection{Conditional value-at-risk linear problems}

A classic problem in risk optimization is to minimize the CVaR, which is a well-known risk measure satisfying the properties of coherency~\cite{artzner1999coherent}. 
In our case, we assume a linear problem, where the objective coefficients $\tilde{r}^\xi$ are random, and we minimize the $\text{CVaR}(x^\top{\tilde{r}^\xi})$ subject to linear constraints $Ax\leq b$. This problem (see~\cite{rockafellar2000optimization}) can be reformulated as
\[ \min_{x,\tau} \left\{ \tau + \frac{1}{\delta}\mathbb{E} \left[-x^\top\tilde{r}^\xi-\tau\right]^+ : Ax\leq b \right\} \]

In our context, $x$ and $\tau$ are the first-stage decisions, while the second-stage subproblem is
\[\mathcal{Q}( (x,\tau),\xi)) := (-x^\top\tilde{r}^\xi-\tau)^+ = \min \{z : z \geq -x^\top\tilde{r}^\xi-\tau, z\geq 0 \}\]
Let us note that the dual of $\mathcal{Q}( (x,\tau),\xi))$ has a single dual variable $\lambda$, and it can be formulated as
\[ \max_\lambda \left\{ (-x^\top\tilde{r}^\xi-\tau)\cdot \lambda : \lambda \leq 1, \lambda\geq 0 \right\}. \] 
Hence, the optimal solution of this dual problem is 
\[ \lambda^* = \begin{cases} 1 & \text{if } -\bar{x}^\top\tilde{r}^\xi - \bar{\tau}  \geq 0\\ 0 & \text{if not} \end{cases} \]
	In other words, there is a hyperplane separating $\Omega$, where the dual variables of the subproblem $\mathcal{Q}\left(  (\bar{x},\bar{\tau}), \xi \right)$ have the same value for a given pair $(\bar{x},\bar{\tau})$.

Therefore, from a partition $\mathcal{P}^{(t)}$ of $\Omega$, we can compute $r_P=\mathbb{E}[\tilde{r}^\xi|P]$m $p_P=\mathbb{P}(P)$ and solve the aggregated problem
\[\min_{x,\tau} \left\{ \tau + \frac{1}{\delta}\sum_{P\in\mathcal{P}^{(t)}}p_P \cdot z_P : Ax\leq b, %
	z_P \geq -x^\top r_P-\tau,   %
	z_P\geq 0 \ \forall P\in\mathcal{P}^{(t)} \right\} \]

	Given the optimal solution $(\bar{x}^{(t)},\bar{\tau}^{(t)})$ of this problem, we can split each $P\in\mathcal{P}^{(t)}$ into subsets $P'=P\bigcap \{\xi : -\bar{x}^{(t)\top} \tilde{r}^\xi \geq \bar{\tau}^{(t)} \}$ and $P''=P\bigcap \{\xi : -\bar{x}^{(t)\top}\tilde{r}^\xi \leq \bar{\tau}^{(t)} \}$ to obtain a new partition. 

	\subsubsection*{Case study:}
	For the computational test, we solve the classic portfolio problem, where $x$ represents the fraction of the portfolio assigned to each investment and the constraints of the first stage are $x^\top e=1, x\geq 0$, ensuring to invest the whole budget in non-negative fractions. Additionally, we assume that returns $\tilde{r}$ of each investment follow a multivariate normal distribution $\tilde{r}^\xi\leadsto \mathcal{N}(\mu, \Sigma)$ using historical data for stocks listed on the SP500, as in~\cite{vielma2008lifted} and~\cite{lagos2015restricted}.
 
	Note that in each iteration, given $(\bar{x}^{(t)},\bar{\tau}^{(t)})$, we can compute an upper bound for the problem expressed as
\begin{equation}
	\text{CVaR}_\delta (\bar{x}^{(t)\top}\tilde{r}^\xi) := \mu^\top \bar{x}^{(t)} + \tfrac{\sigma}{\delta} \phi(\Phi^{-1}(\delta)) 
\end{equation}
where $\sigma=\bar{x}^{(t)\top}\Sigma \bar{x}^{(t)}$ and $\phi$ and $\Phi$ are the standard normal p.d.f and standard normal quantile, respectively.

\begin{table}[tbhp]
	\caption{Results for the CVaR portfolio example}\label{tab:cvar2s0.1a}
	\centering
\begin{tabular}{|c||c|c|c||c||c|c|}
	Iter & LB & UB & Gap & $|\mathcal{P}^{(t)}|$ & $x_1$ & $x_2$\\ \hline
1 &-0.0702 & 0.7641 & 109.184\% & 1 & 0 & 1\\
2 & 0.0408 & 0.6054 & 93.2602\% & 2 & 1 & 0\\
3 & 0.3196 & 0.6054 & 47.2124\% & 4 & 1 & 0\\
4 & 0.3585 & 0.7641 & 40.7887\% & 6 & 0 & 1\\
5 & 0.4584 & 0.5104 & 10.1866\% & 9 & 0.59 & 0.41\\
6 & 0.5001 & 0.5222 & 2.0277\% & 14 & 0.7752 & 0.2248\\
7 & 0.5043 & 0.5095 & 1.0259\% & 20 & 0.6834 & 0.3166\\
8 & 0.5070 & 0.5082 & 0.2305\% & 27 & 0.6371 & 0.3629\\
9 & 0.5082 & 0.5082 & 0.0039\% & 34 & 0.6375 & 0.3625
\end{tabular} 
\end{table}

\begin{figure}[tbhp]
	\centering
	\begin{subfigure}[b]{0.31\textwidth}
		\includegraphics[width=\linewidth]{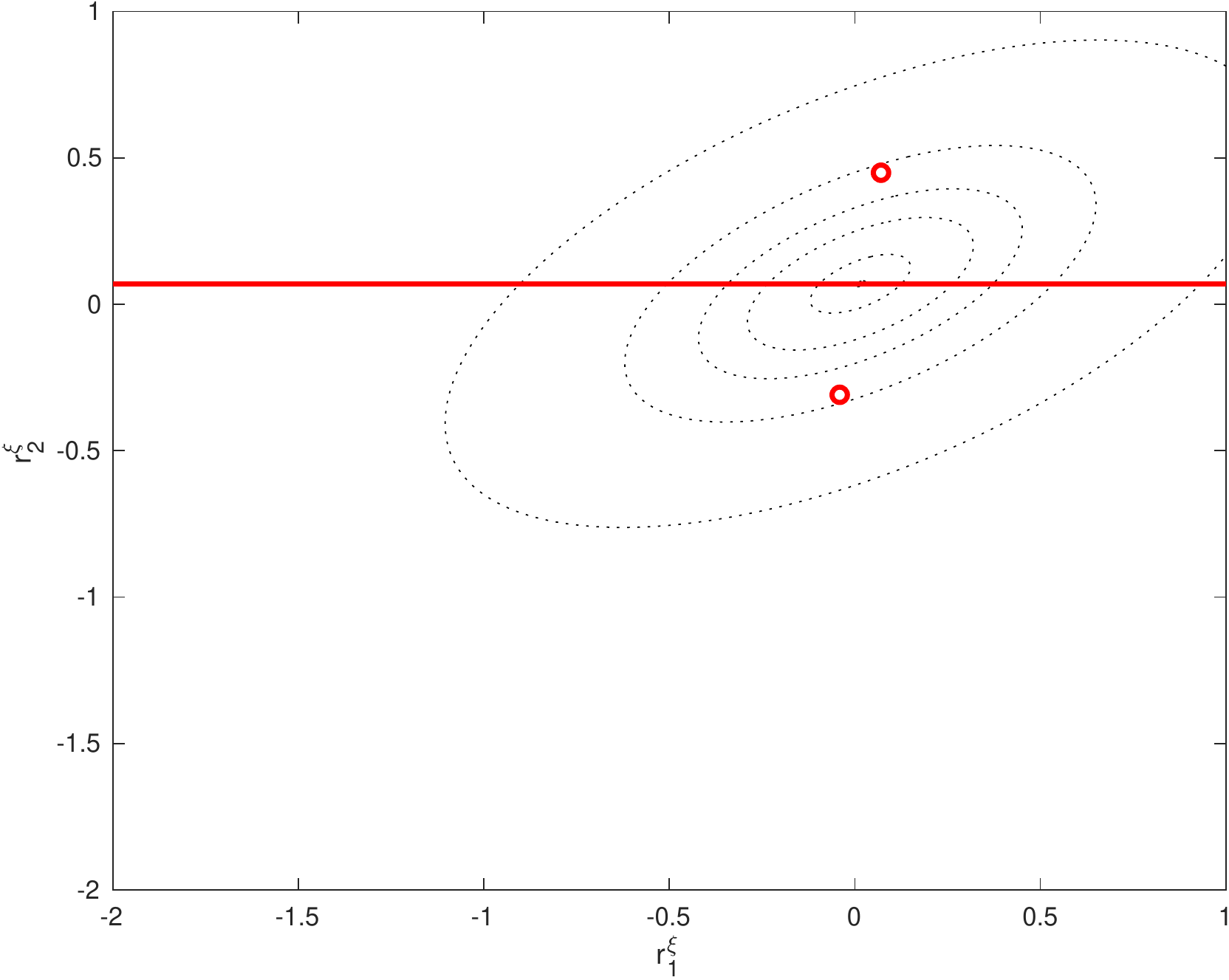}
		\caption{Iteration 1}
	\end{subfigure} 
	~%
	\begin{subfigure}[b]{0.31\textwidth}
		\includegraphics[width=\linewidth]{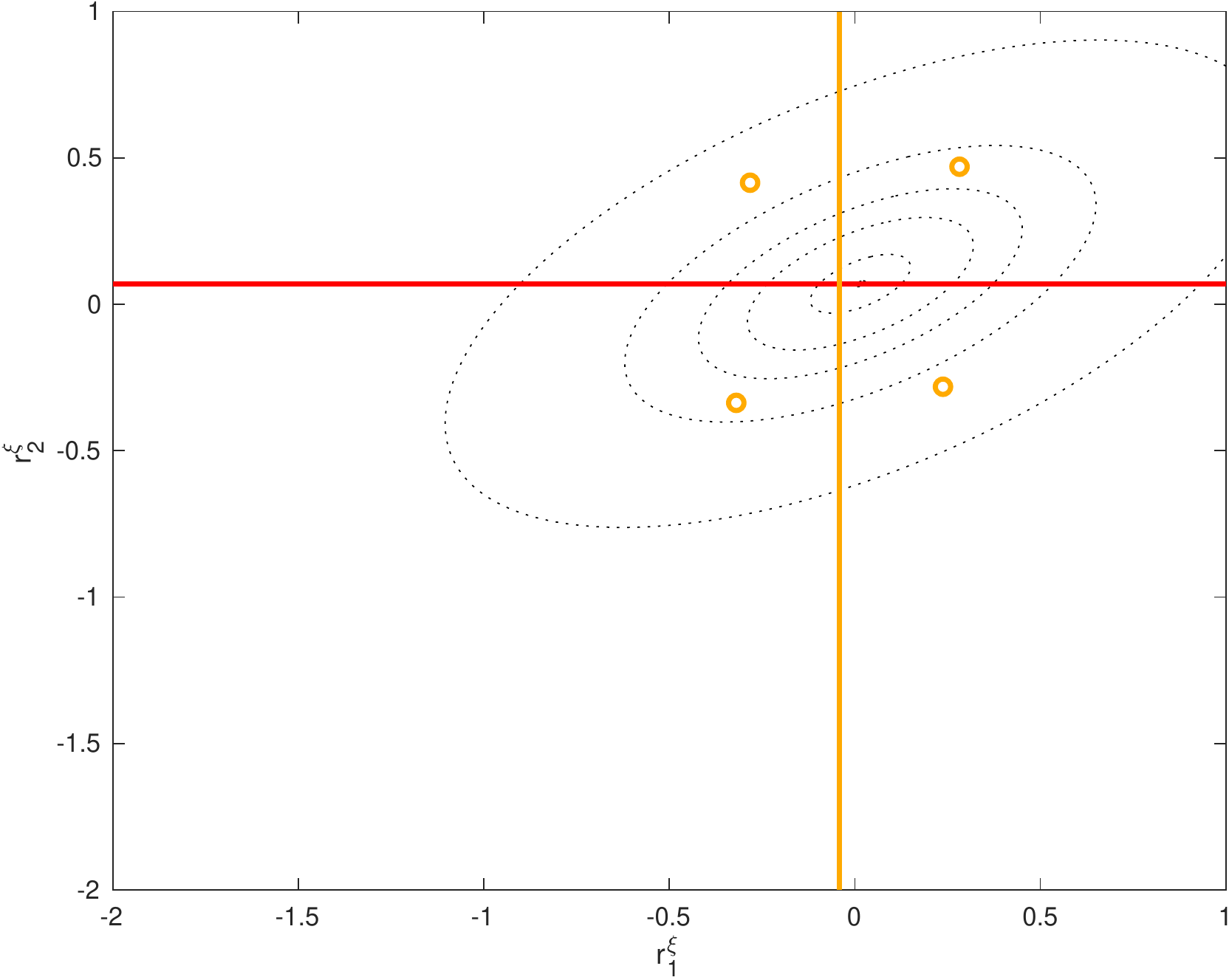}
		\caption{Iteration 2}
	\end{subfigure} 
	\begin{subfigure}[b]{0.31\textwidth}
		\includegraphics[width=\linewidth]{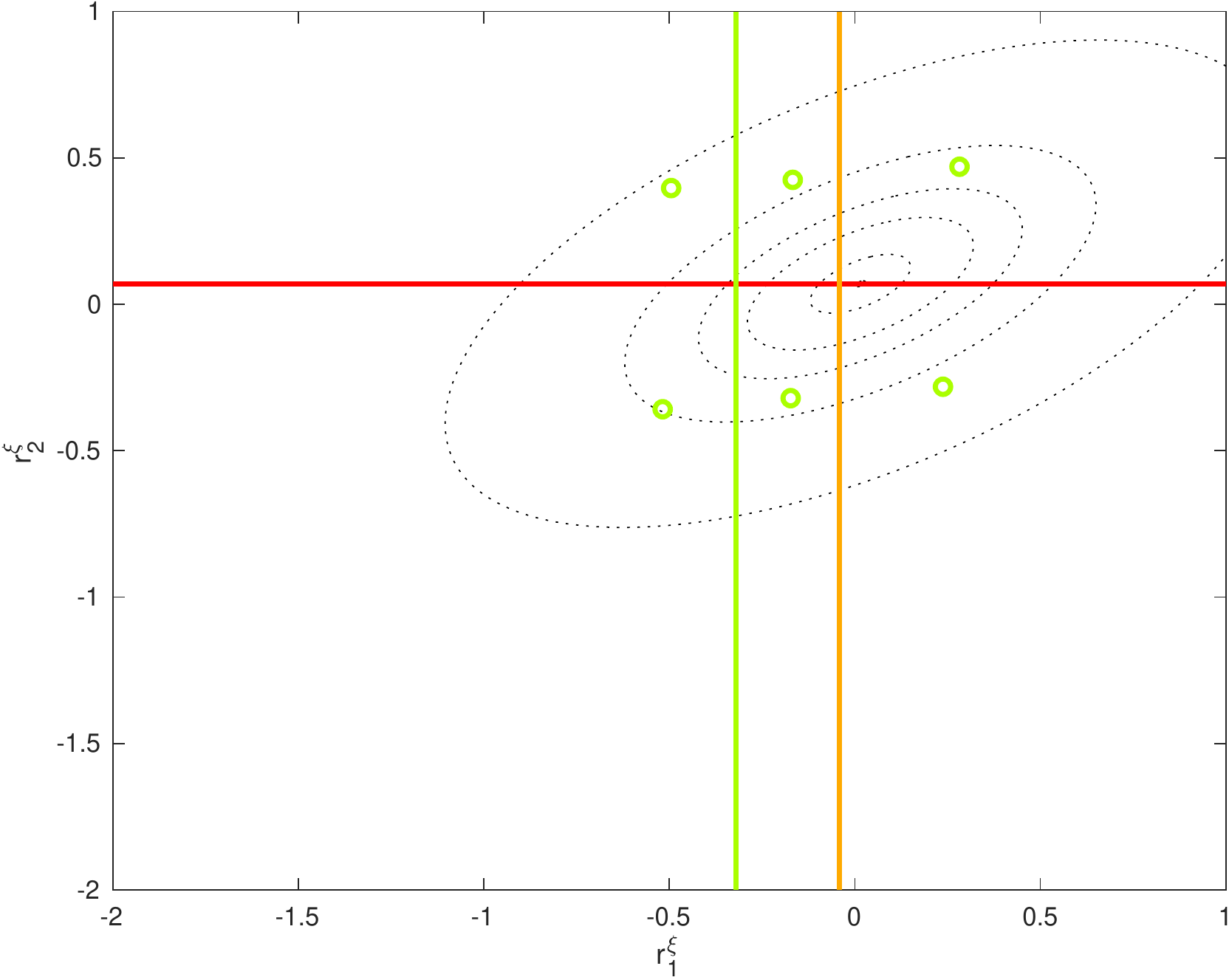}
		\caption{Iteration 3}
	\end{subfigure} 
	\\
	\begin{subfigure}[b]{0.31\textwidth}
		\includegraphics[width=\linewidth]{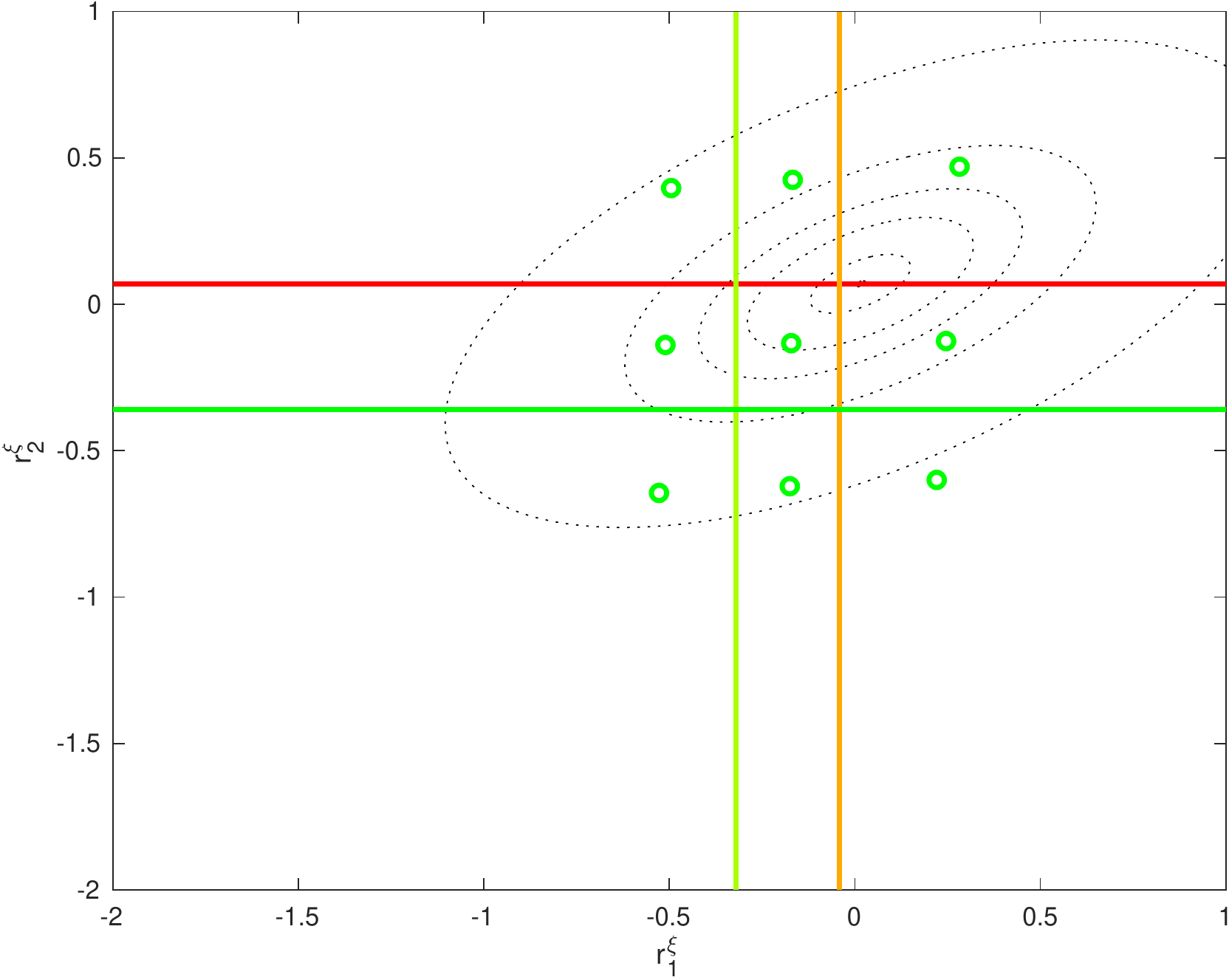}
		\caption{Iteration 4}
	\end{subfigure} 
	~%
	\begin{subfigure}[b]{0.31\textwidth}
		\includegraphics[width=\linewidth]{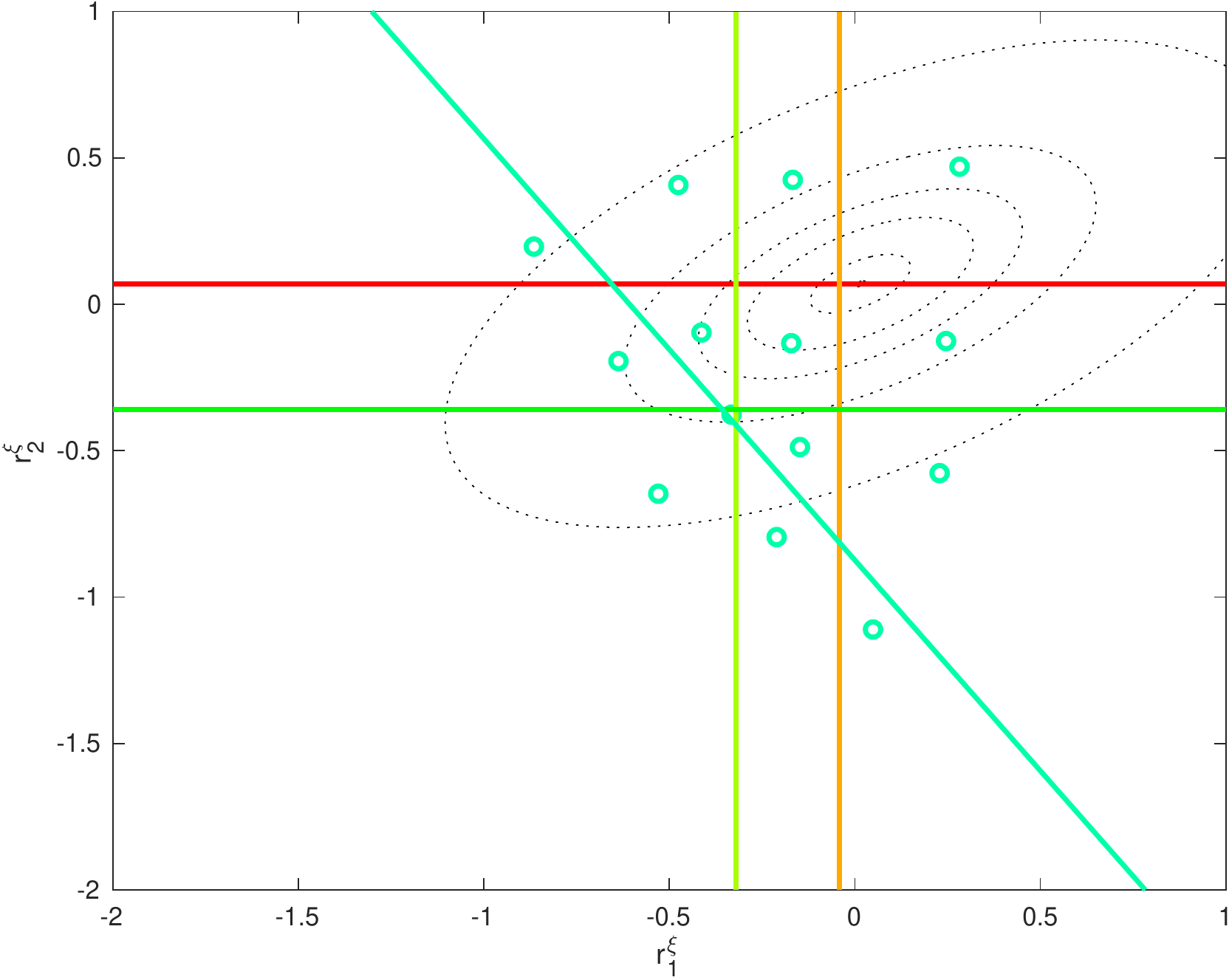}
		\caption{Iteration 5}
	\end{subfigure} 
	~%
	\begin{subfigure}[b]{0.31\textwidth}
		\includegraphics[width=\linewidth]{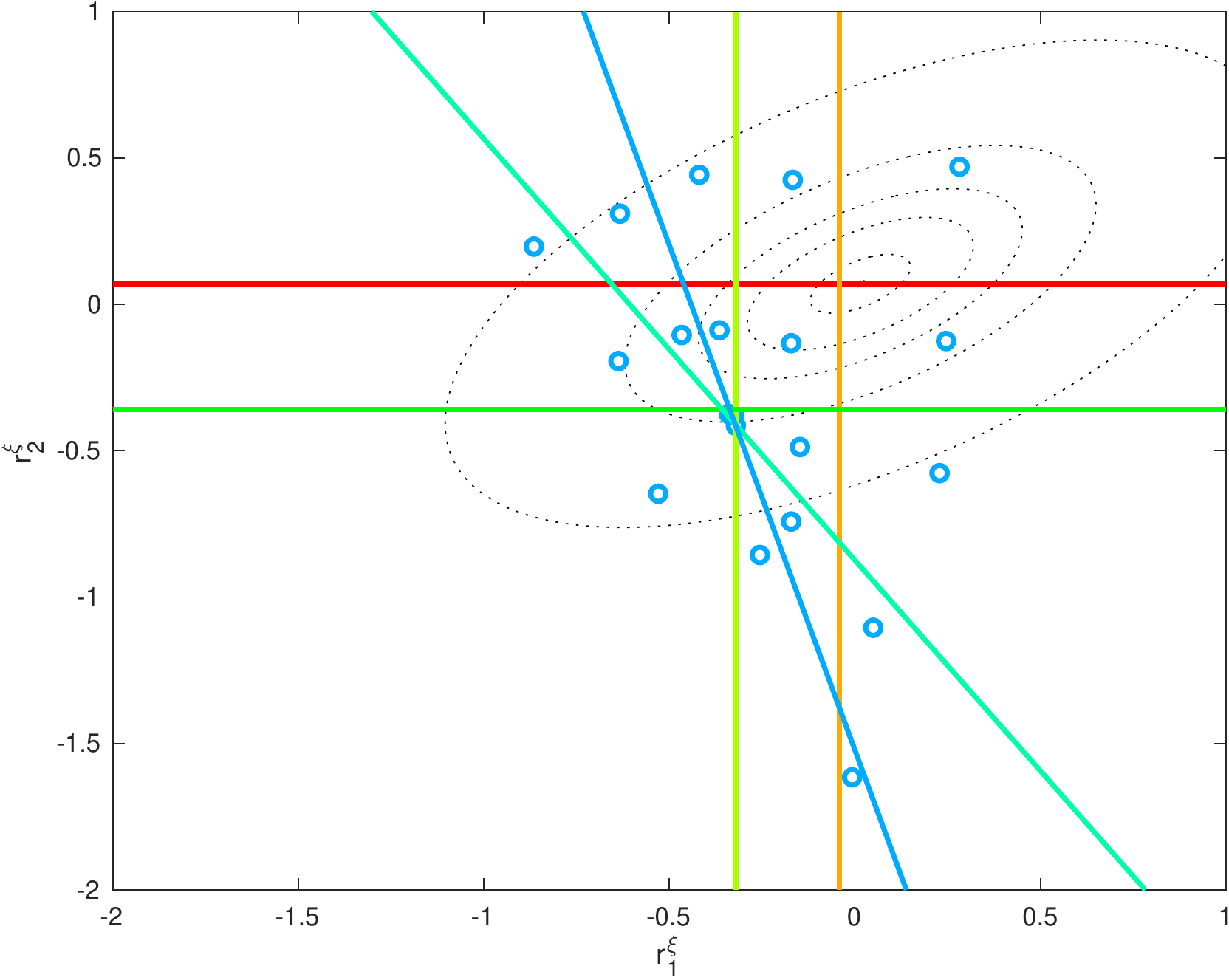}
		\caption{Iteration 6}
	\end{subfigure} 
	\\
	\begin{subfigure}[b]{0.31\textwidth}
		\includegraphics[width=\linewidth]{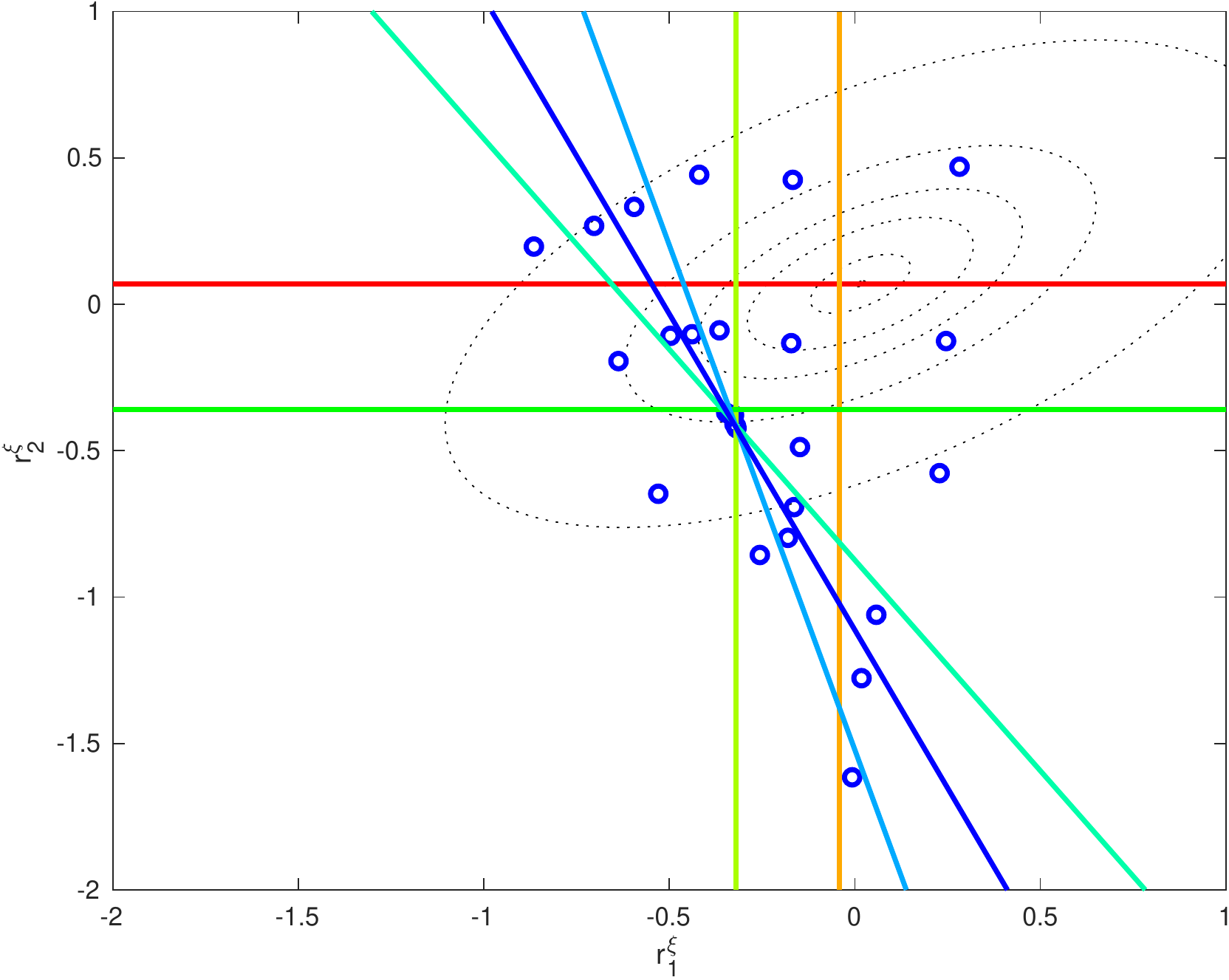}
		\caption{Iteration 7}
	\end{subfigure} 
	~%
	\begin{subfigure}[b]{0.31\textwidth}
		\includegraphics[width=\linewidth]{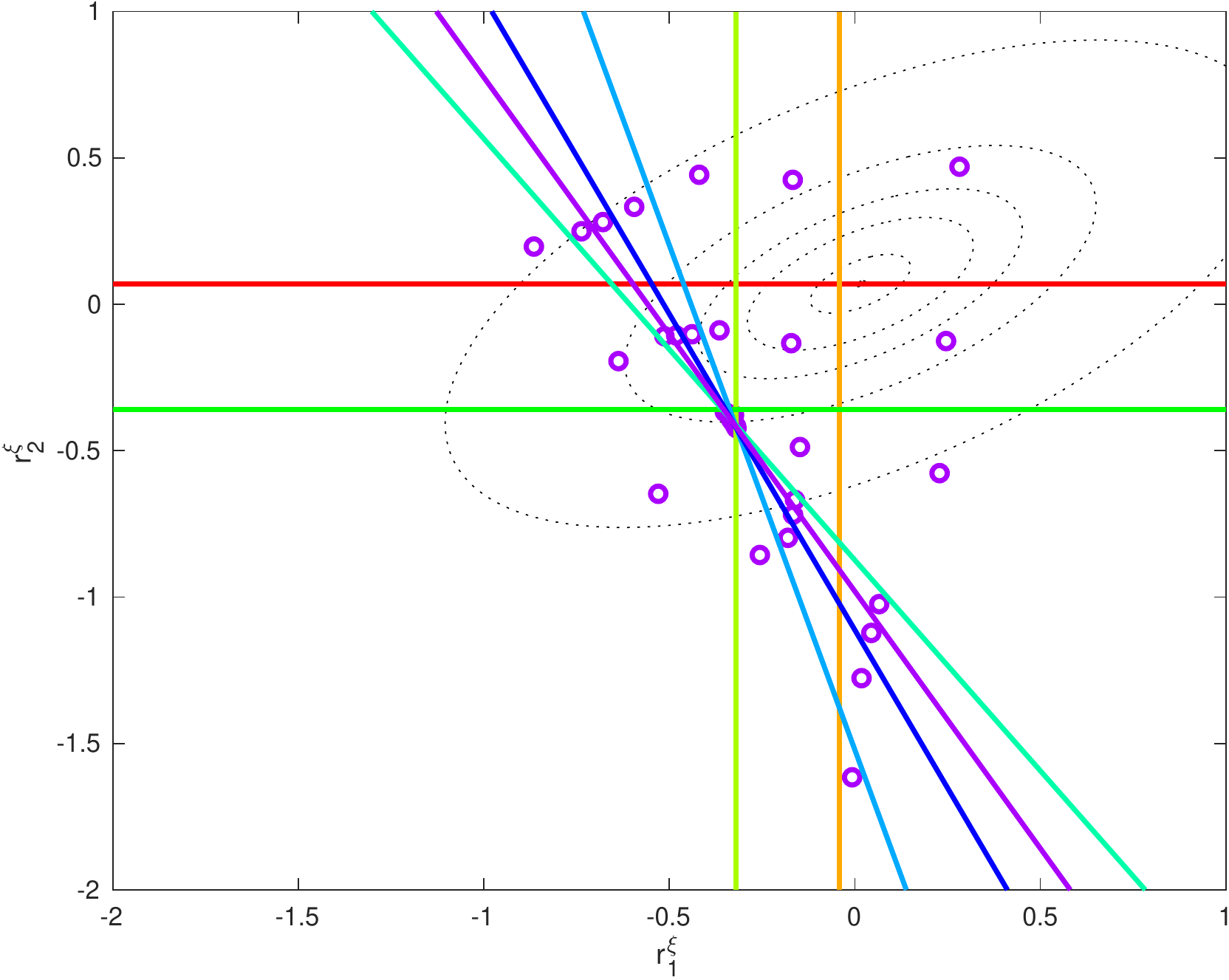}
		\caption{Iteration 8}
	\end{subfigure} 
	~%
	\begin{subfigure}[b]{0.31\textwidth}
		\includegraphics[width=\linewidth]{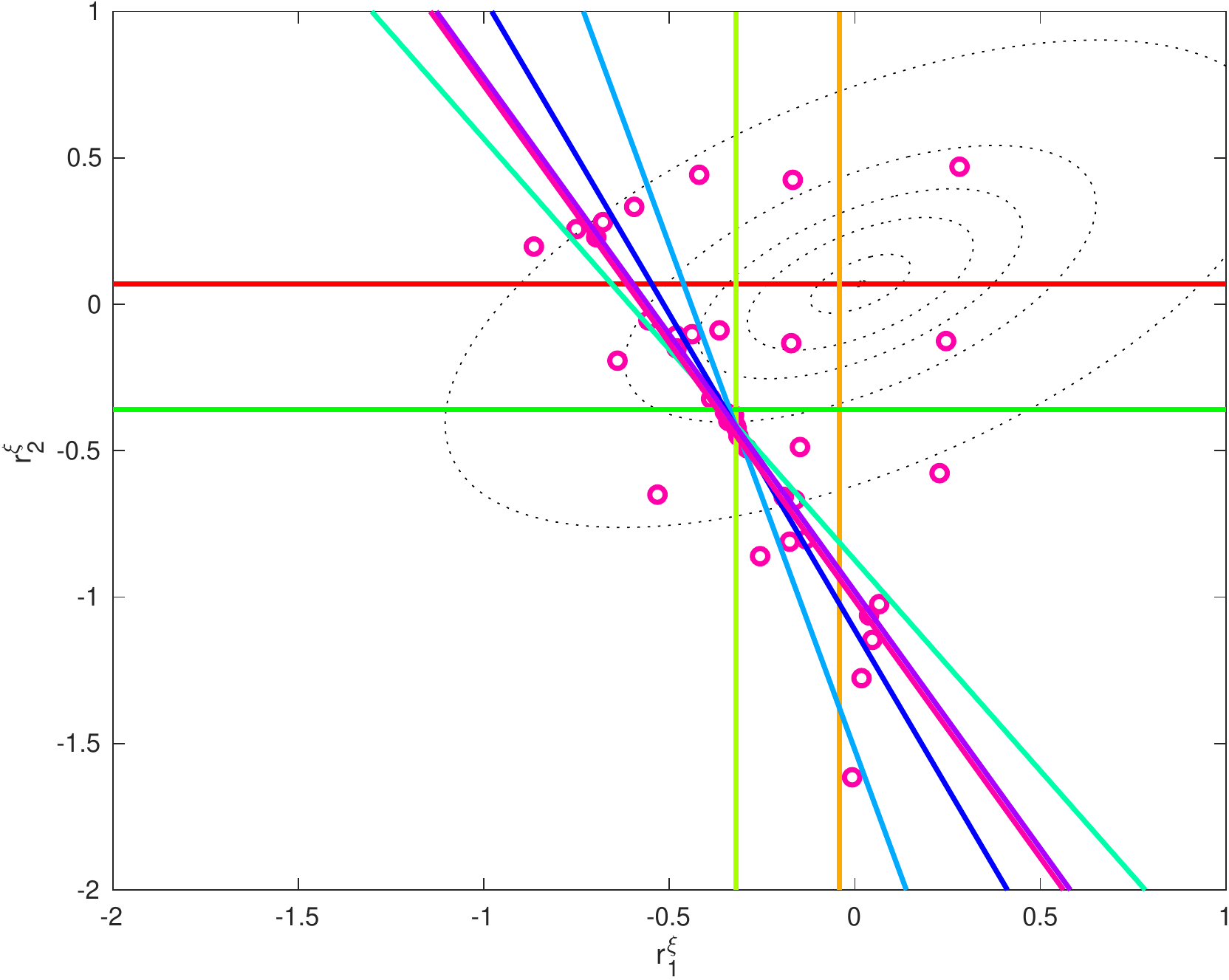}
		\caption{Iteration 9}
	\end{subfigure} 
\caption{Partition of $\Omega$ for the CVaR portfolio example}\label{fig:cvar2s0.1a}
\end{figure}

We solve the problem using two stocks and a risk level of $\delta=0.1$ to provide a graphical representation of the algorithm. To estimate the probabilities and expected return of each region, we use a  Monte Carlo sampler of the underlying distribution. \cref{tab:cvar2s0.1a} shows the results for our instance. We can notice that the problem converges quickly to the optimal solution, as well as in the previous LandS example. A more detailed analysis can be seen in \cref{fig:cvar2s0.1a}, where the region $\Omega$ is presented, with ellipses indicating the 50\%, 80\%, 90\%, 95\% and 99\% confidence intervals of the normal bidimensional distribution. In the first 4 iterations, the aggregated model considers only the riskiest scenario (bottom left dot) and invests the entire portfolio in the stock with the highest return $r'$. Our algorithm generates a cut that divides the uncertainty region into $r_i^\xi \geq r'$ and  $r_i^\xi \leq r'$, where $i$ is the stock where the budget is invested. After Iteration 5, the portfolio starts to combine stocks, and the region of interest is divided more precisely to obtain a better estimation of the optimal problem solution.

At last, we remark that, in both computational examples, several algorithmic improvements can be implemented to solve larger and more complex problems (e.g., reaggregating regions with the same duals, considering only the last $k$ cuts, or subdividing only the active regions; see~\cite{munoz2018study} for more details). Nevertheless, our purpose is simply to show how the method can automatically divide a continuous random space, to iteratively define the regions of interest for the problem and converge to the optimal solution.

\section{Conclusions}\label{sec:conclusions}
We present a generalization of the adaptive partition-based method for solving two-stage stochastic problems that contributes to extend the method to a more general setting, particularly, to consider continuous distributions of the uncertain parameters. The resulting algorithm allows to tackle this type of problems, by automatically disaggregating the uncertainty space and solving a discrete (tractable) problem in each iteration.  Naive computational experiments show the efficacy of the method to refine the uncertainty set in different regions of interest.  It is important to remark that the refining procedure depends considerably on the structure of the problem, but it is sufficiently general for a broad family of problems, namely generating a hyperplane which cuts and splits one or more regions in the current partition.  We strongly believe that this research represents a starting point for further development of computational methods for stochastic problems with continuous distributions. In particular, problems with high dimensional uncertainty and different continuous distribution could be challenging to compute conditional expectations and element probabilities, we suggest the reader to see related literature on numerical methods for this purpose in \cite{l2018generalized} and \cite{matthias2012simulating}.

\bibliographystyle{amsplain}
\bibliography{BibliografiaFinal}

\end{document}